\begin{document}

\newtheorem{theorem}{Theorem}
\newtheorem{lemma}[theorem]{Lemma}
\newtheorem{claim}[theorem]{Claim}
\newtheorem{cor}[theorem]{Corollary}
\newtheorem{conj}[theorem]{Conjecture}
\newtheorem{prop}[theorem]{Proposition}
\newtheorem{definition}[theorem]{Definition}
\newtheorem{question}[theorem]{Question}
\newtheorem{example}[theorem]{Example}
\newcommand{\hh}{{{\mathrm h}}}
\newtheorem{remark}[theorem]{Remark}

\numberwithin{equation}{section}
\numberwithin{theorem}{section}
\numberwithin{table}{section}
\numberwithin{figure}{section}

\def\sssum{\mathop{\sum\!\sum\!\sum}}
\def\ssum{\mathop{\sum\ldots \sum}}
\def\iint{\mathop{\int\ldots \int}}

\def\MSB{{\mathrm{MSB}}}

\newcommand{\diam}{\operatorname{diam}}

\def\squareforqed{\hbox{\rlap{$\sqcap$}$\sqcup$}}
\def\qed{\ifmmode\squareforqed\else{\unskip\nobreak\hfil
\penalty50\hskip1em \nobreak\hfil\squareforqed
\parfillskip=0pt\finalhyphendemerits=0\endgraf}\fi}

\newfont{\teneufm}{eufm10}
\newfont{\seveneufm}{eufm7}
\newfont{\fiveeufm}{eufm5}
%
%
\newfam\eufmfam
     \textfont\eufmfam=\teneufm
\scriptfont\eufmfam=\seveneufm
     \scriptscriptfont\eufmfam=\fiveeufm
%
%
\def\frak#1{{\fam\eufmfam\relax#1}}

\newcommand{\bflambda}{{\boldsymbol{\lambda}}}
\newcommand{\bfmu}{{\boldsymbol{\mu}}}
\newcommand{\bfxi}{{\boldsymbol{\xi}}}
\newcommand{\bfrho}{{\boldsymbol{\rho}}}

\def\eps{\eta}

\def\fK{\mathfrak K}
\def\fT{\mathfrak{T}}

\def\fA{{\mathfrak A}}
\def\fB{{\mathfrak B}}
\def\fC{{\mathfrak C}}
\def\fM{{\mathfrak M}}
\def\fS{{\mathfrak  S}}
\def\fU{{\mathfrak U}}

\def\T {\mathsf {T}}
\def\Tor{\mathsf{T}_d}
\def\Tore{\widetilde{\mathrm{T}}_{d} }

\def\sM {\mathsf {M}}

\def\Rk{\cR_k}
\def\Kmnp{\cK_p(m,n)}
\def\Kmnq{\cK_q(m,n)}

\def \balpha{\bm{\alpha}}
\def \bbeta{\bm{\beta}}
\def \bgamma{\bm{\gamma}}
\def \bdelta{\bm{\delta}}
\def \blambda{\bm{\lambda}}
\def \bchi{\bm{\chi}}
\def \bphi{\bm{\varphi}}
\def \bpsi{\bm{\psi}}
\def \bnu{\bm{\nu}}
\def \bomega{\bm{\omega}}

\def \bxi{\bm{\xi}}

\def \bell{\bm{\ell}}

\def\eqref#1{(\ref{#1})}

\def\vec#1{\mathbf{#1}}

\def\fl#1{\left\lfloor#1\right\rfloor}
\def\rf#1{\left\lceil#1\right\rceil}

\newcommand{\abs}[1]{\left| #1 \right|}

\def\Zq{\mathbb{Z}_q}
\def\Zqx{\mathbb{Z}_q^*}
\def\Zd{\mathbb{Z}_d}
\def\Zdx{\mathbb{Z}_d^*}
\def\Zf{\mathbb{Z}_f}
\def\Zfx{\mathbb{Z}_f^*}
\def\Zp{\mathbb{Z}_p}
\def\Zpx{\mathbb{Z}_p^*}
\def\cM{\mathcal M}
\def\cE{\mathcal E}
\def\cH{\mathcal H}

\def\sfB{\mathsf {B}}
\def\sfC{\mathsf {C}}
\def\L{\mathsf {L}}
\def\FF{\mathsf {F}}

\def\cA{{\mathcal A}}
\def\cB{{\mathcal B}}
\def\cC{{\mathcal C}}
\def\cD{{\mathcal D}}
\def\cE{{\mathcal E}}
\def\cF{{\mathcal F}}
\def\cG{{\mathcal G}}
\def\cH{{\mathcal H}}
\def\cI{{\mathcal I}}
\def\cJ{{\mathcal J}}
\def\cK{{\mathcal K}}
\def\cL{{\mathcal L}}
\def\cM{{\mathcal M}}
\def\cN{{\mathcal N}}
\def\cO{{\mathcal O}}
\def\cP{{\mathcal P}}
\def\cQ{{\mathcal Q}}
\def\cR{{\mathcal R}}
\def\cS{{\mathcal S}}
\def\cT{{\mathcal T}}
\def\cU{{\mathcal U}}
\def\cV{{\mathcal V}}
\def\cW{{\mathcal W}}
\def\cX{{\mathcal X}}
\def\cY{{\mathcal Y}}
\def\cZ{{\mathcal Z}}
\newcommand{\rmod}[1]{\: \mbox{mod} \: #1}

\def\cg{{\mathcal g}}

\def\vr{\mathbf r}
\def\vx{\mathbf x}
\def\va{\mathbf a}
\def\vb{\mathbf b}
\def\vc{\mathbf c}
\def\vh{\mathbf h}
\def\vk{\mathbf k}
\def\vz{\mathbf z}
\def\vu{\mathbf u}
\def\vv{\mathbf v}

\def\e{{\mathbf{\,e}}}
\def\ep{{\mathbf{\,e}}_p}
\def\eq{{\mathbf{\,e}}_q}

\def\Tr{{\mathrm{Tr}}}
\def\Nm{{\mathrm{Nm}}}

 \def\SS{{\mathbf{S}}}

\def\lcm{{\mathrm{lcm}}}

 \def\0{{\mathbf{0}}}
 
 \def\tf{\widetilde f}
 
 \def\va {{\mathbf a}}
\def \vb {{\mathbf b}}
\def \vc {{\mathbf c}}
\def\vx{{\mathbf x}}
\def \vr {{\mathbf r}}
\def \vs {{\mathbf s}}
\def \vv {{\mathbf v}}
\def\vu{{\mathbf u}}
\def \vw{{\mathbf w}}
\def \vz {{\mathbf z}}
\def \ve {{\mathbf e}}
\def \vp {{\mathbf p}}

\def\ord {\mathrm{ord}}

\def\near#1{\left\lfloor #1\right\rceil}
\def\cvp#1{\left\lfloor{\vec{#1}}\right\rceil}
 
 \def \vol {\mathrm{Vol}}

\def\dist {{\mathrm{\,dist\,}}}

\def\({\left(}
\def\){\right)}
\def\l|{\left|}
\def\r|{\right|}
\def\fl#1{\left\lfloor#1\right\rfloor}
\def\rf#1{\left\lceil#1\right\rceil}
\def\sumstar#1{\mathop{\sum\vphantom|^{\!\!*}\,}_{#1}}

\def\mand{\qquad \mbox{and} \qquad}

 \def\dmod#1#2{\left|#1\right|_{#2}}

\def\flpk#1{{\left\lfloor#1\right\rfloor}_{p^k}}
\def\flp#1{{\left\lfloor#1\right\rfloor}_p}
\def\flm#1{{\left\lfloor#1\right\rfloor}_m}
\def\flq#1{{\left\lfloor#1\right\rfloor}_q}




\hyphenation{re-pub-lished}

\mathsurround=1pt

\def\bfdefault{b}

\def \F{{\mathbb F}}
\def \K{{\mathbb K}}
\def \N{{\mathbb N}}
\def \Z{{\mathbb Z}}
\def \Q{{\mathbb Q}}
\def \R{{\mathbb R}}
\def \C{{\mathbb C}}
\def\Fp{\F_p}
\def \fp{\Fp^*}

 \def \xbar{\overline x}

\title[Noisy polynomial interpolation]{
Noisy polynomial interpolation modulo prime powers}

\author[M. Karpinski]{Marek Karpinski}
\address{Department of Computer Science, Bonn University, 
 53113 Bonn, Germany} 
\email{marek@cs.uni-bonn.de}

\author[I. E. Shparlinski] {Igor E. Shparlinski} 
\address{Department of Pure Mathematics, University of New South Wales, 
Sydney, NSW 2052, Australia}
\email{igor.shparlinski@unsw.edu.au}

\date{\today}

\begin{abstract}
We consider  the {\it noisy polynomial interpolation problem\/}
of recovering an unknown $s$-sparse polynomial $f(X)$ over the  ring $\Z_{p^k}$
of residues modulo $p^k$, where $p$ is a small prime and $k$ is a large integer parameter,
from approximate values of the residues of $f(t) \in \Z_{p^k}$. 
Similar results are known for residues modulo a large prime $p$, however the case of 
prime power modulus $p^k$, with small $p$ and large $k$, is new and requires different techniques. We give a deterministic
polynomial time algorithm, which for almost given more than a half bits of $f(t)$ 
for sufficiently many randomly chosen points $t \in \Z_{p^k}^*$, recovers $f(X)$. 
\end{abstract}

\keywords{Noisy polynomial interpolation,
 finite fields, lattice reduction} 
 
\subjclass[2010]{11T71, 11Y16, 68Q25, 68W30}

\maketitle

\section{Introduction}


There is a long history and very extensive literature dedicated to algorithms on polynomials in finite fields, see, for example~\cite{vzGG}.
More recently, there was also increasing interest to algorithms for polynomials over residue rings, especially in residue rings modulo prime 
powers, see~\cite{CGRW,DMS1,DMS2,vzGH,  HJPW, KRRZ,  Sir} and references therein. Here we continue this directions and consider  
the {\it noisy polynomial  interpolation problem\/} modulo prime powers which is analogue to the same problem in finite fields~\cite{Shp1,ShpWint},
which in turn is an extension of the {\ hidden number problem} of Boneh and Venkatesan~\cite{BV1,BV2}.

To be more precise, for an integer $m$ we denore by $\Z_m = \Z/m\Z$ the residue ring modulo an integer $m \ge 1$,
and by $\Z_m^*$ the group of units of $\Z_m$.

Then the {\it noisy polynomial  interpolation problem\/} is the problem of
finding an unknown $s$-sparse polynomial
\begin{equation}
\label{eq:poly}
f(X) = \sum_{j=1}^s a_j X^{e_j} \in \Z_m[X],
\end{equation}
with monomials of degrees  $e_s> \ldots > e_1\ge1$ from approximations to
the  values of  $f(t)$ (treated as  integers from  the set  $\{0, 1, \ldots, m-1\}$) 
at polynomially many points
$t \in\Z_m$ selected uniformly at random.

Several problems of this type are related to the so-called
{\it hidden number problem\/} introduced by Boneh and 
Venkatesan~\cite{BV1,BV2}, which corresponds to a linear polynomial $f(X) = aX$
with unknown $a$, and have already been 
studied intensively due to their cryptographic relevance, see
the survey~\cite{Shp2}.  For sparse polynomials this problem 
has been studied in~\cite{Shp1,ShpWint},   for  some recent 
modifications motivated by cryptographic applications, see~\cite{G-MRTS}.

More precisely for integers $u$ and $m \ge 1$ we denote by  $\flm{s}$
the remainder of $s$ on division by $m$.

Furthermore.
for integers $s$, $m \ge 1$ and a real    $\ell \ge 0$ we denote by  $\MSB_{\ell,m}(s)$
any integer $u$ such that
\begin{equation}
\label{Lbits}
  \left |\flm{s}-u \right | \le m/2^{\ell+1}.
\end{equation}
Roughly speaking, $\MSB_{\ell,m}(z)$ gives $\ell$ most significant
bits of the remainder on division of $z$ by $m$.
However, this definition is more flexible and suits
better  our purposes. In particular we remark that $\ell$ in the
inequality~\eqref{Lbits} is not necessarily an integer.

The {\it sparse polynomial noisy interpolation problem\/} is the problem of
finding a  polynomial $f(X) \in \Z_m[X]$ of the form~\eqref{eq:poly} with {\it known\/}
exponents $e_1, \ldots, e_s$ and {\it unknown\/} coefficients $a_1, \ldots, a_s\in \Z_m$. 
from approximate values of  $\flm{f(t)}$ at polynomially many points
$t \in\Z_m$
selected uniformly at random.  
We remark that we always assume that the exponents    $e_s, \ldots ,  e_1$ are positive since if 
$\ell$ is not very small, it is impossible to distinguish between $f(X)$ and $f(X)+1$. 

Here we are interested in the setting where the modulus  $m=p^k$ is a large power of 
a fixed prime, for example $m = 2^k$, while previous works~\cite{BV1,BV2, G-MRTS, Shp1,Shp2, ShpWint},  
address the case when $m=p$ is a large prime. In the case of $m=p^k$, we use the ideas of~\cite{Shp1, ShpWint} 
combined with new number theoretic tools, coming from~\cite{Shp0}, and  give a  polynomial time algorithm provided 
that for each $t$ slightly  more than a half of the bits of 
$\MSB_{\ell,p^k}(f(t))$ are given. 

We note that algorithm itself is {\it deterministic\/}, and the only {\it randomness\/}  is in the choice of the   evaluation 
points $t$, while the consecutive computation is deterministic. 
 
\section{Our results}

We recall that the notations $U = O(V)$,  $U \ll V$  and $V \gg U$ are all
equivalent to the assertion that the inequality $|U|\le c|V|$ holds for some
constant $c>0$, which throughout the paper may   depend on    the real positive parameters $\eta$ and $\varepsilon$, 
the  integer $s\ge 1$ 
and  the prime $p$. 

It is also convenient to define $\log z$ as the binary logarithm of real $z > 0$.

We always assume that 
$$
n = \rf{k\log p} 
$$
is the bit length of the modulus $q = p^k$. 

Our result depends on the $p$-divisibility of the following  determinant, 
formed by binomial coefficients
$$
\Delta(e_1, \ldots, e_s)= \det 
\begin{pmatrix}  \binom{e_1}{1}, & \ldots, &\binom{e_s}{1} \\
           \ldots,          & \ldots, &  \ldots \\
          \binom{e_1}{s}, & \ldots, &\binom{e_s}{s}
\end{pmatrix} = \prod_{i=1}^s\frac{e_i}{i!} \prod_{1\le i < j\le s} (e_j - e_i). 
$$

Finally, for  an integer $a \ne 0$ we denote by $\ord_p a$ the $p$-adic order of $a$, that is, 
the largest integer  $\alpha$  with $p^{\alpha} \mid a$ and by 
$$
\|a\|_p = p^{-\ord_p a} 
$$
the $p$-adic valuation of $a$.

\begin{theorem}
\label{thm:PolAppr} 
Let $q= p^k$ be a sufficiently large $n$-bit 
power of a fixed prime $p$ and let $s \ge 1$ be a fixed  integer. 
Assume that for the integers $1\le e_1 < \ldots < e_s < q$ and real $\ell$ we have
\begin{equation}
\label{eq:Cond rho eta}
 \ell \ge (0.5 + \varepsilon)n
\mand  \|\Delta(e_1, \ldots, e_s)\|_p  \ge s 2^{-\ell(1/(s+1) -\varepsilon) } 
\end{equation}
 for some fixed $\varepsilon> 0$. 
Then there exists a  deterministic polynomial time algorithm $\cA$
such that for any polynomial $f(X)    \in \Z_{q}[X]$
of the form~\eqref{eq:poly}, given $2d$ integers  \begin{equation}
\label{eq:tiwi}
t_i  \mand  w_i = \MSB_{\ell, q}\(f(t_i) \), \qquad i=1,\ldots, d,
\end{equation}
 where 
$$ 
d = \rf{ 4(s+1)\varepsilon^{-1}}, 
$$
its output satisfies
$$
\Pr_{t_1, \ldots, t_d \in \Z_q^*} \bigl[
\cA\(t_1, \ldots, t_d; w_1, \ldots, w_d \)  = (a_1, \ldots,a_s)\bigr] \ge 1- 1/q
$$
if  $t_1, \ldots, t_d$ are
chosen uniformly and independently at
random from $\Z_q^*$.
\end{theorem}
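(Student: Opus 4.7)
The plan is to follow the lattice-based template of~\cite{Shp1,ShpWint} for noisy polynomial interpolation, adapted to prime-power moduli using the $p$-adic counting estimates of~\cite{Shp0}. Writing $f(t_i) \equiv w_i + \varepsilon_i \pmod{q}$ with $|\varepsilon_i| \le q/2^{\ell+1}$, the congruences $\sum_{j=1}^s a_j t_i^{e_j} \equiv w_i + \varepsilon_i \pmod{q}$ turn the recovery of $(a_1, \ldots, a_s)$ into a closest-vector instance in an explicit lattice of dimension $O(d+s)$, whose true solution lies within $\ell_\infty$-distance $q/2^{\ell+1}$ of an explicit target built from $(w_1, \ldots, w_d)$. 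Since $d + s$ is an absolute constant depending only on $s$ and $\varepsilon$, LLL reduction followed by Babai's nearest-plane algorithm locates this closest vector in deterministic polynomial time and reads off $(a_1, \ldots, a_s)$, provided uniqueness holds within the decoding radius.

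Correctness thus reduces to a uniqueness statement. If some $(a'_1,\ldots,a'_s) \ne (a_1,\ldots,a_s)$ were also consistent with the data, then the nonzero polynomial $h(X) = \sum_{j=1}^s b_j X^{e_j}$ with $b_j = a_j - a'_j$ would satisfy $|h(t_i)|_q \le q/2^{\ell}$ for every $i = 1, \ldots, d$, where $|x|_q$ denotes the distance from $x$ to $0$ in the cyclic group $\Z/q\Z$. The main analytic task is therefore a uniform bound
$$
\Pr_{t_1,\ldots,t_d \in \Z_q^*}\Bigl[\, |h(t_i)|_q \le q/2^{\ell} \text{ for all } i\,\Bigr] \ll q^{-s-1},
$$
so that a union bound over the at most $q^s$ nonzero sparse candidates $h$ keeps the total failure probability below $1/q$.

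The heart of the matter, and where the $p^k$-setting genuinely differs from the prime case, is the corresponding single-point bound (with $Q = q/2^{\ell}$). Over $\Z_{p^k}$ a sparse polynomial can have entire arithmetic progressions modulo small powers of $p$ as level sets, so naive root counting is too weak. The key input from~\cite{Shp0} is a Taylor-expansion argument: the coefficients $c_k(t) = \sum_{j=1}^s b_j \binom{e_j}{k} t^{e_j - k}$ in $h(t+y) = h(t) + \sum_{k\ge 1} c_k(t) y^k$ depend linearly on $(b_1,\ldots,b_s)$ via the $s\times s$ matrix with entries $\binom{e_j}{k} t^{e_j-k}$, whose determinant factors as $t^{\sum e_j - s(s+1)/2} \cdot \Delta(e_1,\ldots,e_s)$ and hence, since $t \in \Z_q^*$, has the same $p$-adic valuation as $\Delta$. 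Smallness of $h$ at $s+1$ $p$-adically nearby points therefore forces $p$-adic control of the $b_j$ proportional to $\|\Delta\|_p^{-1}$, producing a single-point probability of the shape $(Q/q)^{1/(s+1)} \|\Delta\|_p^{-1}$. Inserting the hypothesis~\eqref{eq:Cond rho eta} gives single-point probability $\ll 2^{-\varepsilon \ell}$; raising to the $d$-th power by independence of the $t_i$ and summing over $q^s = 2^{ns}$ candidates, the choice $d = \rf{4(s+1)\varepsilon^{-1}}$ combined with $\ell \ge (0.5 + \varepsilon) n$ makes the total failure probability comfortably smaller than $1/q = 2^{-n}$. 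The main obstacle is establishing this $p$-adic single-point bound; the rest is routine lattice algorithmics and parameter bookkeeping.
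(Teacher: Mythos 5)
Your overall architecture matches the paper: encode recovery as a closest-vector problem in a lattice of fixed dimension $d+s$ built from the evaluation points and the known exponents, argue uniqueness of the near vector with high probability over the choice of $t_i$, and solve CVP exactly in deterministic polynomial time because the dimension is bounded. Two issues, one minor and one substantive. Minor: LLL followed by Babai's nearest-plane does not ``locate this closest vector''; it returns a $2^{O(d+s)}$-approximation, which is only a constant factor in fixed dimension but still requires either shrinking the decoding radius by that constant, or (as the paper does) invoking an exact polynomial-time CVP solver for fixed dimension, namely Micciancio--Voulgaris~\cite{MicVou}. You should also explicitly handle the degenerate case $D=\gcd(b_1,\dots,b_s,q)>1$: only the observation $D\le 2^{-\ell+1}q$ lets one reduce to a primitive polynomial modulo $q/D$ and still apply the counting estimate (this is the content of Corollary~\ref{cor:CongGen} in the paper, and the hypothesis~\eqref{eq:Cond rho eta} with the extra factor $s 2^{-\ell(1/(s+1)-\varepsilon)}$ rather than $s q^{-(1/(s+1)-\varepsilon)}$ is designed precisely to survive this reduction).

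The substantive gap is exactly where you flag it: the single-point bound $\Pr_t[\,|h(t)|_q\le q/2^\ell\,]\ll q^{-c\varepsilon}$ for a fixed nonzero sparse $h$. Your sketch—``smallness of $h$ at $s{+}1$ $p$-adically nearby points forces control of the $b_j$, giving probability $(Q/q)^{1/(s+1)}\|\Delta\|_p^{-1}$''—is not a proof and, as stated, is aimed at the wrong quantity: it constrains the coefficient vector $(b_1,\dots,b_s)$ given smallness at several points, whereas what is needed is, for a \emph{fixed} $h$, an upper bound on the \emph{number} of $t\in\Z_q^*$ landing in a short interval. The paper obtains this by first proving an exponential-sum bound $|S(F,q)|\ll q^{1-1/s+\eta(s+1)/s}$ (Lemma~\ref{lem:ExpSum}, a version of~\cite[Lemma~5]{Shp0} with the dependence on $\rho=\ord_p\Delta$ made explicit; the Taylor-expansion/determinant mechanism you cite lives entirely inside that proof), and then converting it to the counting statement via the Erd\H{o}s--Tur\'an inequality (Lemma~\ref{lem:ET} and Lemma~\ref{lem:Congr}), taking care that when $\gcd(h',q)=d$ the inner sum reduces to one modulo $q/d$ so the $p^\rho$ hypothesis must still hold with $q/d$ in place of $q$. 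Without this exponential-sum-plus-Erd\H{o}s--Tur\'an bridge (or a genuine replacement for it), your single-point probability bound is unsupported, and the claimed exponent $(Q/q)^{1/(s+1)}\|\Delta\|_p^{-1}$ does not match what the paper's Lemma~\ref{lem:Congr} actually yields.
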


Analysing the proof of Theorem~\ref{thm:PolAppr} one can easily see that the value of $d$ is
not optimised and can be improved at the cost of more tedious calculations.  

We note that it is natural to expect that any result of the type of Theorem~\ref{thm:PolAppr}  should depend 
on the $p$-divisibility of  the  determinant
$\Delta(e_1, \ldots, e_s)$,   which in turn measures the $p$-adic closeness of the exponents in
different monomials of $f$ and thus controls  $p$-adic independence of these monomials.

\section{Congruences with sparse  polynomials}
\label{sec:Cong Poly}

For a polynomial $F  \in \Z_{m}[X]$ in a residue ring modulo $m\ge 2$, and   integers $a$ and  $h$, 
we denote by $N_F(a,h;m)$ the number of solutions to the congruence
\begin{equation}
\label{eq:Cong Fxu}
F(x) \equiv u \pmod m, \qquad x \in \Z_{m}^*,  \ u \in \{a+1, \ldots, a+h\}.
\end{equation}

A  natural and powerful tool to estimate  $N_F(a,h;m)$ is given by 
bounds on exponential sums 
$$
S(F,m) =  \sum_{\substack{x = 1\\ \gcd(x,m) = 1}}^m  \e_m (F(x)), 
$$
where 
$$
\e_m(z) = \exp(2 \pi i z/m).
$$

In fact a bound on such sums  for a sparse polynomial $F$ as in~\eqref{eq:poly} has been given in~\cite[Theorem~1]{Shp0}, 
which however requires that the $\deg f$ is bounded (independently of $q$) and thus makes it is very restrictive for our applications. 
Bourgain~\cite{Bourg} has given  different versions of this result and relaxed 
the condition of $\deg f$ however  the corresponding bounds are weaker. 
 
Here we exploit the fact that   the results and method of~\cite{Shp0} allow us  to obtain a 
bound   on $N_F(a,h;q)$ which depends on the $p$-divisibility of $\Delta(e_1, \ldots, e_s)$ (which controls $p$-adic  properties of the differences between exponents
$e_1,\ldots, e_n$) rather than on $\deg f = \max\{e_1,\ldots, e_n\}$. 

First we need a slightly modified  and explicit version of a result from~\cite{Shp0}. 

\begin{lemma}
\label{lem:ExpSum}
Let  $q=p^\alpha$ be a power of a fixed prime $p$ and let 
$$
F(X)    =  \sum_{j=1}^s A_j X^{e_j} \in \Z_q[X]
$$
be  a   polynomial   such that 
$$
\gcd(A_1, \ldots, A_s, p)  = 1 \mand e_s > \ldots > e_1 \ge 1.
$$
If for some   fixed $\eta > 0$ we have 
$$
p^\rho \le s^{-1} q^\eta, 
$$
where 
$$
\rho = \ord_p \Delta(e_1, \ldots, e_s), 
$$
then  
$$
\left|S(F,q) \right | \ll   q^{1-1/s +   \eta(s+1)/s}.
$$
\end{lemma}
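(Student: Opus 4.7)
I would prove this by following the $p$-adic Weyl differencing strategy of \cite{Shp0}, making the dependence on $\rho=\ord_p\Delta(e_1,\ldots,e_s)$ explicit. The key observation is that the Taylor coefficients of a sparse polynomial $F$ at $x$ involve precisely the binomial matrix whose determinant is $\Delta(e_1,\ldots,e_s)$, so the $p$-adic non-degeneracy of this matrix is exactly what controls the effectiveness of the differencing.

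To begin, I would pick an integer $\beta$ chosen so that $(s+1)\beta$ is close to $\alpha$, set $H=p^\beta$, and observe that the shift $x\mapsto x+p^\beta h$ preserves $\Z_q^*$, so that
$$
H\cdot S(F,q)=\sum_{x\in\Z_q^*}\sum_{h=0}^{H-1}\e_q\bigl(F(x+p^\beta h)\bigr).
$$
Taylor-expanding each monomial and using $(s+1)\beta\ge\alpha$ to kill the terms of order $\ge s+1$ in $h$ modulo $q$, one obtains
$$
F(x+p^\beta h)\equiv F(x)+\sum_{k=1}^{s}(p^\beta h)^k G_k(x)\pmod q,\qquad G_k(x)=\sum_{j=1}^{s}A_j\binom{e_j}{k}x^{e_j-k}.
$$
An iterated application of the Cauchy--Schwarz (or H\"older) inequality in the $h$-variables, performed $s$ times with fresh auxiliary shifts, then reduces the estimate to complete sums of the shape $\sum_{x\in\Z_q^*}\e_q\bigl(c_1 G_1(x)+\ldots+c_s G_s(x)\bigr)$, where $(c_1,\ldots,c_s)$ are explicit integer combinations of the auxiliary variables and powers of $p^\beta$.

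The crucial algebraic input is that the linear map $(A_1,\ldots,A_s)\mapsto(G_1,\ldots,G_s)$ is governed by the binomial matrix $\bigl(\binom{e_j}{k}\bigr)$ whose determinant is $\Delta(e_1,\ldots,e_s)$; the hypothesis $p^\rho\le s^{-1}q^\eta$ then guarantees that, for all but a small fraction of auxiliary tuples, the linear combination $\sum_k c_k G_k(x)$ is $p$-adically non-degenerate, and the corresponding inner sum admits the standard $q^{1-1/s}$-type bound. The main obstacle I anticipate is the careful bookkeeping of $p$-adic losses across the $s$ iterations: one must verify that the accumulated contribution from the $O(p^\rho)$ exceptional tuples at each step, together with the unavoidable $q^{1/s}$-type factor from differencing, combines precisely into the claimed exponent $1-1/s+\eta(s+1)/s$, with the factor $s^{-1}$ in the hypothesis absorbing the $s$ exceptional branches and the balance between $p^\beta$ and $p^{\alpha-s\beta}$ producing the main term. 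Beyond this, the proof should be a faithful but explicit rerun of the argument in \cite{Shp0}.
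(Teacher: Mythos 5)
Your proposal shares with the paper the Taylor expansion along shifts $x\mapsto x+p^\beta h$ and the (correct) insight that the binomial matrix $\bigl(\binom{e_j}{k}\bigr)$ with determinant $\Delta(e_1,\ldots,e_s)$ governs the $p$-adic non-degeneracy of the Taylor coefficients. But the analytic engine you propose is different from the paper's and, as described, does not deliver the stated exponent. The paper (tracing \cite[Lemma~5]{Shp0}) takes $\beta=\fl{\alpha\eta}+1$, so $p^\beta\approx q^\eta$ is \emph{small}; it writes each $x\in\Z_q^*$ as $x_0+p^\beta y$ with $x_0$ modulo $p^\beta$ and $y$ running over a \emph{complete} residue system modulo $p^{\alpha-\beta}$, so that after Taylor expansion the inner sum $\sigma(x_0)$ is a complete exponential sum of a genuine low-degree polynomial in $y$ modulo $p^{\alpha-\beta}$. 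That complete sum is then bounded by Mit'kin's lemma, whose saving is controlled by the $p$-adic content $\vartheta_{x_0}$ of the coefficients $D_\nu F(x_0)$; the inequality $\vartheta_{x_0}\le\rho+\beta(s-1)$ from \cite[Lemma~2]{Shp0} is exactly where $\Delta$ enters, and no Cauchy--Schwarz or differencing occurs at any point.

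Your arrangement inverts this: with $\beta\approx\alpha/(s+1)$ the shift variable $h$ runs over the short range $0\le h<p^\beta$ while the relevant modulus is $p^{\alpha-\beta}\ge p^{s\beta}$, so the inner sum is badly incomplete, and you propose to rescue it by $s$ rounds of Cauchy--Schwarz. Two concrete problems arise. First, iterated Weyl-type differencing squares the sum at each step, so $s$ iterations yield a saving of order $2^{-s}$ in the exponent, not the $1/s$ saving required; a $1/s$ saving needs a stationary-phase/complete-sum input, which your setup has discarded. Second, and fatally, the sums you reduce to, $\sum_{x\in\Z_q^*}\e_q\bigl(c_1G_1(x)+\cdots+c_sG_s(x)\bigr)$, are again complete sums of \emph{sparse} polynomials whose exponents $e_j-k$ can be of size comparable to $q$ --- precisely the object the lemma is trying to bound. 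There is no ``standard $q^{1-1/s}$-type bound'' for such sums (the classical bounds of Hua/Chebotarev/Cochrane--Zheng type are in terms of the degree, which here is astronomically large), so the endgame of your argument is circular. The bookkeeping you flag as ``the main obstacle'' is therefore not a technicality: it is the place where the argument, as structured, cannot close, and the fix is to adopt the paper's complete-inner-sum decomposition rather than differencing.
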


\begin{proof} We essentially follow the proof of~\cite[Lemma~5]{Shp0} and trace the dependence on $\rho$. 
In particular, as in~\cite{Shp0} we fix some $\eta>0$ and define $\beta = \fl{\alpha \eta} + 1$
and define the integer $m$ by the inequalities 
$$
\beta m < \alpha < \beta (m+1).
$$

Let us also  define the following differential operators 
$$
D_\nu = \frac{1}{\nu !} \cdot   \frac{d^\nu} { d x^\nu} 
\qquad \nu = 0, 1, \ldots .
$$
Finally, let 
$$
\vartheta_x = \max\{  \beta (\nu-1) +  \ord_p D_\nu F(x):~\nu =1, \ldots, s \}. 
$$
By~\cite[Lemma~2]{Shp0}, the inequality 
\begin{equation}
\label{eq:theta rho}
\vartheta_x \le \rho + \beta (s-1) 
\end{equation}
holds, provided that $\gcd(x,p)=1$. 

We now  note by our assumption on the size of $p^\rho$ we have 
$$
\beta s - \vartheta_x \ge \beta - \rho \ge \alpha \eta - \rho > \frac{\log s}{\log p}, 
$$
which ensures that the condition of~\cite[Lemma~4]{Shp0} is verified, see also~\cite[Equation~(5)]{Shp0} 

Then by~\cite[Equation~(6)]{Shp0} we have 
\begin{equation}
\label{eq:S sigma}
\left|S(F,q) \right |  \le  \sum_{\substack{x = 1\\ \gcd(x,p) = 1}}^{p^\beta} \left|  \sigma(x)\right |,
\end{equation}
where 
$$
\sigma(x) =    \sum_{ y=1}^{p^{\alpha -\beta}}  
 \e_{p^{\alpha -\beta}}\( \sum_{\nu = 1}^m  p^{\beta(\nu-1)}y^\nu D_\nu F(x)\). 
$$
We can certainly assume that $\alpha$ is large enough (in terms of $\eta$) and thus we are in the case $\alpha-\beta > 1$
of the proof of~\cite[Lemma~5]{Shp0}.  In this case, by~\cite[Equation~(8)]{Shp0}, we have 
$$
\sigma(x) = O\(p^{\alpha(1-1/s)}\)
$$
provided that $\rho$ is fixed. We now trace the dependence on $\rho$, which is explicit in the proof of~\cite[Lemma~5]{Shp0} 
till the very last step. 

More precisely, it is shown in the proof of~\cite[Lemma~5]{Shp0}  that for $\vartheta_x  \ge \alpha -\beta$ we have 
$$
 \left|  \sigma(x)\right |  \le p^{\rho + \beta (s-1)}. 
 $$
Hence we see from~\eqref{eq:S sigma} that the total contribution to the bound on $\left|S(F,q) \right |$ 
from all such values of $x$,  which we denote by $\Sigma_1$, is at most 
\begin{equation}
\label{eq:S1}
\Sigma_1 \le p^\beta  p^{\rho + \beta (s-1)} =  p^{\rho + \beta s}\le   p^{\rho + \eta \alpha s + s}. 
\end{equation}

Furthermore, in the case when  $\vartheta_x  < \alpha -\beta$ it is shown in the proof of~\cite[Lemma~5]{Shp0}  that 
for some constant $c$ which depends only on $t$ and $m$ (and thus only on  $t$ and $\eta$)
$$
 \left|  \sigma(x)\right |  \le c p^{\vartheta_x + (\alpha -\beta - \vartheta_x) (1-1/s)}
$$
and also that the exponent satisfies the inequality 
\begin{align*}
  \vartheta_x + (\alpha -\beta - \vartheta_x) (1-1/s) & =   \(\vartheta_x - \beta (s-1)\)/s + \alpha(1-1/s)\\
  &\le \rho/s + \alpha(1-1/s),
\end{align*}
which follows from~\eqref{eq:theta rho}. 
  Hence we see from~\eqref{eq:S sigma} that the total contribution to the bound on $\left|S(F,q) \right |$ 
from all such values of $x$, which we denote by $\Sigma_2$,  is at most 
\begin{equation}
\label{eq:S2}
\Sigma_2 \le  c p^\beta  p^{\rho/s + \alpha(1-1/s)} \le c  p^{\rho/s +   \alpha(1-1/s) +  \eta \alpha}.
\end{equation}

Combining~\eqref{eq:S1} and~\eqref{eq:S2}, we see  from~\eqref{eq:S sigma}  and the assumed rrestriction of $p^\rho$ that 
\begin{align*}
\left|S(F,q) \right | &  \le \Sigma_1 + \Sigma_2  \le    p^{\rho + \eta \alpha s + s} + c  p^{\rho/s +   \alpha(1-1/s) +  \eta \alpha}\\
& \ll  p^{\eta \alpha (s +1)} +   p^{\eta \alpha/s +\alpha(1-1/s) +  \eta \alpha}
\ll  q^{\eta  (s +1)} +   q^{1-1/s +   \eta(s+1)/s}.
\end{align*}
We can assume that $\eta \le 1/(s+1)$ as otherwise the second term in the above inequality exceeds the
trivial bound $q$. On the other hand for  $\eta< 1/(s+1)$ we have 
$$
\eta  (s +1)< 1-1/s +   \eta(s+1)/s.
$$
Hence the second term always 
dominates and the desired bound follows. 
 \end{proof}
 
 We remark that the bound of exponential sums of~\cite[Lemma~5]{Shp0}, which underlies the proof of Lemma~\ref{lem:ExpSum},  is  based, in turn 
 or a result of Mit'kin~\cite[Lemma~1.2]{Mit}. Using a bound of Cochrane and  Zheng~\cite[Equation~(2.11)]{CoZhe}
 one can get better values of implied constants; this  however does not affect our main result.

Combining Lemma~\ref{lem:ExpSum} with  the classical {\it  Erd\H{o}s--Tur\'{a}n inequality\/} (see, for example,~\cite[Theorem~1.21]{DrTi}), 
which links the irregularity of distribution of sequences to exponential sums, we immediately derive that 
$N_F(a,h;q) $ is close to its expected value
$$
 h\frac{\varphi(q)}{q} =  h\frac{p-1}{p}, 
$$
where $\varphi(q)$ is the Euler function. More precisely, we recall that  the {\it discrepancy\/} $D(N) $ of a sequence in $\xi_1, \ldots, \xi_N \in [0,1)$    is defined as 
$$
D(N) = 
\sup_{0\le \alpha < \beta \le 1} \left |  \#\{1\le n\le N:~\xi_n\in [\alpha, \beta)\} -(\beta-\alpha)  N \right |, 
$$
where  $\# \cS$ denotes  the cardinality of  $\cS$ (if it is finite), see~\cite{DrTi} for background.

By  the classical Erd\H{o}s--Tur\'{a}n inequality (see, for instance,~\cite[Theorem~1.21]{DrTi}) we have the following 
estimate of the discrepancy via exponential sums. 

\begin{lemma}
\label{lem:ET}
Let $\xi_n$, $n\in \N$,  be a sequence in $[0,1)$. Then for any $H\in \N$,  we have 
$$
D(N)  \le 3 \left( \frac{N}{H+1} +\sum_{h=1}^{H}\frac{1}{h} \left| \sum_{n=1}^{N} \exp(2\pi i h\xi_n) \right | \right).
$$
\end{lemma}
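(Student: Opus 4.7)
The plan is to follow the classical Erd\H{o}s--Tur\'{a}n strategy: approximate the indicator of each interval $[\alpha,\beta)\subset[0,1)$ from above and below by trigonometric polynomials of degree at most $H$, and then expand the resulting sums into exactly the exponential sums over $\xi_1,\ldots,\xi_N$ that appear on the right-hand side of the claimed bound.

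First I would construct, for the given integer $H\ge 1$, two real trigonometric polynomials $P^{-}$ and $P^{+}$ of degree at most $H$ satisfying $P^{-}(x) \le \chi_{[\alpha,\beta)}(x) \le P^{+}(x)$ for all $x \in [0,1)$, together with $\int_0^1(P^{+}(x)-P^{-}(x))\,dx \le C/(H+1)$ for an absolute constant $C$. This is provided by Selberg's (or, equivalently, Vaaler's) majorant/minorant construction, which in addition furnishes Fourier expansions $P^{\pm}(x)=\sum_{|h|\le H} c_h^{\pm}\,\exp(2\pi i hx)$ with $c_0^{\pm}=\beta-\alpha+O(1/(H+1))$ and $|c_h^{\pm}|\ll 1/|h|$ for $1\le|h|\le H$.

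Next I would substitute $x=\xi_n$ and sum over $n=1,\ldots,N$. Since $P^{-}\le\chi_{[\alpha,\beta)}\le P^{+}$ pointwise, this sandwiches the counting function $\#\{1\le n\le N:\xi_n\in[\alpha,\beta)\}$. Separating the constant term from the higher harmonics, the difference between the count and its expected value $(\beta-\alpha)N$ is controlled by $O(N/(H+1))$ plus
$$
\sum_{1\le |h|\le H}|c_h^{\pm}|\left|\sum_{n=1}^{N}\exp(2\pi i h\xi_n)\right|
\ll \sum_{h=1}^{H}\frac{1}{h}\left|\sum_{n=1}^{N}\exp(2\pi i h\xi_n)\right|,
$$
where pairing $h$ with $-h$ uses complex conjugation to combine the two exponentials. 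Taking the supremum over all admissible $\alpha,\beta$ and absorbing all absolute constants into the coefficient $3$ completes the argument.

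The main technical obstacle is the production of majorants with both the correct $L^{1}$-gap $O(1/H)$ and the sharp $1/|h|$ decay of their Fourier coefficients simultaneously; this is a delicate but entirely standard piece of Fourier analysis, worked out carefully in~\cite{DrTi}. Once these extremal trigonometric polynomials are in hand, the rest of the proof is routine algebraic bookkeeping.
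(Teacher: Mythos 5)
The paper does not actually prove this lemma: it is stated as a classical result and cited directly from~\cite[Theorem~1.21]{DrTi}, so there is no ``paper's own proof'' to compare against. Your sketch via Selberg--Vaaler extremal trigonometric polynomials is nonetheless a correct outline of the standard modern proof of the Erd\H{o}s--Tur\'an inequality, and it is essentially the argument underlying the Drmota--Tichy reference.

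One point to tighten: the final step cannot really be phrased as ``absorbing all absolute constants into the coefficient $3$.'' The constant $3$ is part of the statement being proved, so it must be derived, not absorbed. Concretely, the Selberg construction yields the precise bounds
$$
\int_0^1 \bigl(P^{+}(x)-P^{-}(x)\bigr)\,dx = \frac{2}{H+1},
\qquad
\bigl|c_h^{\pm}\bigr| \le \frac{1}{H+1} + \min\Bigl(\beta-\alpha,\ \frac{1}{\pi|h|}\Bigr)
\quad (1\le|h|\le H),
$$
and it is only by carrying these exact constants through the sandwich argument --- and taking a little care in how the $\pm h$ terms and the two majorants are combined --- that the factor $3$ (rather than some larger absolute constant) emerges. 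As a proof \emph{sketch} this is a forgivable shorthand, but a full proof would need that bookkeeping made explicit. Apart from this, the approach is sound and would succeed.
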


We now interpret the congruence~\eqref{eq:Cong Fxu} as a condition on the fractional parts $\{F(x)/q\}$ to fall in a certain interval 
of a unit torus $\R/\Z \cong [0,1)$ 
of length $h/q$. This immediately implies the desired result.

\begin{lemma}
\label{lem:Congr}
Let  $q=p^\alpha$ be a power of a fixed prime $p$ and let 
$$
F(X)    =  \sum_{j=1}^s A_j X^{e_j}\in \Z_q[X]
$$
be  a   polynomial   such that 
$$
\gcd(A_1, \ldots, A_s, p)  = 1 \mand e_s > \ldots > e_1 \ge 1.
$$
If for some  fixed $\varepsilon > 0$ we have 
$$
p^\rho \le  s^{-1}   q^{1/(s+1) -\varepsilon},
$$
where 
$$
\rho = \ord_p \Delta(e_1, \ldots, e_s), 
$$
then   
$$
\left| N_F(a,h;q)  - h\frac{p-1}{p} \right| \ll  q^{1-\varepsilon} \log q  .
$$
\end{lemma}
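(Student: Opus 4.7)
The strategy is the standard one: convert the counting quantity $N_F(a,h;q)$ into a discrepancy bound via the \emph{Erd\H{o}s--Tur\'{a}n inequality} (Lemma~\ref{lem:ET}), and then bound the resulting exponential sums through Lemma~\ref{lem:ExpSum}. Concretely, I consider the sequence $\xi_x = \{F(x)/q\}$ indexed by $x\in\Z_q^*$, a multiset of $\varphi(q)=q(p-1)/p$ points in $[0,1)$. The condition in~\eqref{eq:Cong Fxu} translates to $\xi_x$ lying in the arc $[a/q,(a+h)/q)$ of length $h/q$, so the expected count is $\varphi(q)\cdot h/q=h(p-1)/p$ and $|N_F(a,h;q)-h(p-1)/p|\le D(\varphi(q))$ where $D$ denotes the discrepancy of the $\xi_x$. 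Applying Lemma~\ref{lem:ET} with a parameter $H$ to be chosen yields
$$
D(\varphi(q))\ll \frac{q}{H+1}+\sum_{h=1}^{H}\frac{1}{h}\,|S(hF,q)|.
$$

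\textbf{Handling the exponential sums.} I would then split the $h$-sum according to the $p$-adic valuation of $h$: write $h=p^j h'$ with $\gcd(h',p)=1$ and $0\le j\le \log_p H$. Summing over residues modulo $p^{k-j}$ gives the reduction
$$
S(hF,q)=p^{j}\,S(h'F,p^{k-j}),
$$
since $hF(x)\bmod q$ depends on $x$ only through its class modulo $p^{k-j}$ (and each such class lifts to $p^j$ units of $\Z_q^*$). Crucially, the weight $1/h=1/(p^j h')$ cancels the prefactor $p^j$, so the contribution of the terms with valuation $j$ is
$$
\sum_{\substack{h'\le H/p^j\\ \gcd(h',p)=1}}\frac{1}{h'}\,|S(h'F,p^{k-j})|.
$$
For each such $h'$ the polynomial $h'F$ still has coefficients with $\gcd(h'A_1,\ldots,h'A_s,p)=1$ and the same $\Delta(e_1,\ldots,e_s)$, so Lemma~\ref{lem:ExpSum} applies to modulus $p^{k-j}$ with the same $\rho$, provided $p^\rho\le s^{-1}(p^{k-j})^{1/(s+1)-\varepsilon'}$ for some $\varepsilon'>0$.

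\textbf{Combining and main obstacle.} The hypothesis $p^\rho\le s^{-1}q^{1/(s+1)-\varepsilon}$ guarantees this inequality for all $j\le J$ with $J$ of order $k\varepsilon(s+1)/2$, and for such $j$ Lemma~\ref{lem:ExpSum} yields $|S(h'F,p^{k-j})|\ll (p^{k-j})^{1-c\varepsilon}$ for some constant $c=c(s)>0$. For the remaining, larger values of $j$ I would use the trivial bound $|S(h'F,p^{k-j})|\le p^{k-j}\le q/p^{J}\ll q^{1-c'\varepsilon}$. Summing over $h'\le H/p^j$ contributes a factor $\log H$, and summing over $j$ costs only another bounded factor since the $p^{k-j}$ form a geometric progression. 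Choosing $H=\lfloor q^{\varepsilon/2}\rfloor$ (or any similar small power of $q$) balances $q/(H+1)$ with the exponential sum contribution and produces the desired bound $\ll q^{1-\varepsilon}\log q$. The main technical obstacle is the book-keeping in the second step: verifying that the $p$-adic reduction preserves the determinant $\Delta$ and that the valuation condition in Lemma~\ref{lem:ExpSum} remains satisfied for the reduced modulus $p^{k-j}$ across the relevant range of $j$, with the surplus between $1/(s+1)-\varepsilon$ and the actual exponent providing a uniform saving $q^{-c\varepsilon}$.
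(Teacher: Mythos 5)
Your proposal follows essentially the same route as the paper: Erd\H{o}s--Tur\'an to reduce the counting to exponential sums, factoring out the $p$-adic part $d=\gcd(h,q)$ of the frequency $h$ so that $S(hF,q)=d\,S((h/d)F,q/d)$, and then invoking Lemma~\ref{lem:ExpSum} for the reduced modulus while checking that the divisibility hypothesis on $p^\rho$ survives the reduction $q\mapsto q/d$. One calibration point: your choice $H=\lfloor q^{\varepsilon/2}\rfloor$ makes the Erd\H{o}s--Tur\'an main term $q/(H+1)\approx q^{1-\varepsilon/2}$, which is larger than the target $q^{1-\varepsilon}$; the paper instead takes $H=\lfloor q^\varepsilon\rfloor$ and tunes $\eta=\frac{1-(s+1)\varepsilon}{(s+1)(1-\varepsilon)}$ so that the condition $p^\rho\le s^{-1}(q/d)^\eta$ holds automatically for every $d\le H$ (making your separate trivial-bound regime for large $j$ unnecessary) and the exponential-sum saving exactly matches the Erd\H{o}s--Tur\'an term, yielding $q^{1-\varepsilon}\log q$ on the nose.
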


\begin{proof} We set 
$$
\eta = \frac{1-(s+1)\varepsilon}{\(s+1\)\(1-\varepsilon\)} \mand
H =   \fl{q^{\varepsilon}}.
$$ 
For each exponential sum which appears in the bound of Lemma~\ref{lem:ET}  corresponding to $h$ with $\gcd(h,q)=d$, we have 
\begin{equation}
\label{eq:Red Sum}
 \sum_{\substack{x = 1\\ \gcd(x,p) = 1}}^q  \e_q (hF(x))
 = d \sum_{\substack{x = 1\\ \gcd(x,p) = 1}}^q  \e_{q/d} ((h/d)F(x))
\end{equation}
Furthermore, for each $h=1, \ldots, H$ we note that 
\begin{equation}
\label{eq:Cond rho}
s^{-1}  \(q/d\)^{\eta}\ge s^{-1}  \(q/H\)^{\eta} \ge   s^{-1}   \(q^{1-\varepsilon}\)^{\eta} 
=s^{-1}   q^{1/(s+1) -\varepsilon}
 \ge p^\rho . 
\end{equation}
We also observe that 
\begin{equation}
\begin{split} 
\label{eq:exponent}
1/s  - \eta(s+1)/s= \frac{1}{s} \(1 -\eta(s+1) \) 
= \frac{1}{s}\(1 -   \frac{1-(s+1)\varepsilon}{1-\varepsilon} \) = 
 \frac{\varepsilon}{1-\varepsilon}. 
\end{split} 
\end{equation}
We see from~\eqref{eq:Cond rho}  that we can now apply Lemma~\ref{lem:ExpSum} to the sum on the right hand side of~\eqref{eq:Red Sum} 
with the above $\eta$. Thus we taking into account our calculation in~\eqref{eq:exponent}, we obtain 
\begin{align*}
 \sum_{\substack{x = 1\\ \gcd(x,p) = 1}}^q  \e_q (hF(x))& \ll 
 d (q/d)^{1-1/s +    \eta(s+1)/s}\
 = q (q/d)^{-1/s +    \eta(s+1)/s}\\
 &  = q (q/d)^{-\varepsilon/(1-\varepsilon)} \le    q \(q^{1-\varepsilon}\)^{-\varepsilon/(1-\varepsilon)}= q^{1-\varepsilon}, 
\end{align*}
Therefore,  Lemma~\ref{lem:ET} now yields
$$
\left| N_F(a,h;q)  - h\frac{p-1}{p} \right| \ll q^{1-\varepsilon} \log q .
$$
which concludes the proof. 
\end{proof} 


Finally, it is convenient to have an upper bound on $N_F(a,h;q) $ for polynomials with non-necessary co-prime with $p$ coefficients. 
Namely if for $F$ as in Lemma~\ref{lem:Congr} we have 
$$
\gcd(A_1, \ldots, A_s, q)  = D
$$
then provided $h \le q/D$ we have
\begin{equation}
\label{eq:Red}
N_F(a,h;q) = D N_{D^{-1} F}(a,h;q/D).
\end{equation}

\begin{cor}
\label{cor:CongGen} 
Let  $q=p^\alpha$ be a power of a fixed prime $p$ and let 
$$
F(X)    =  \sum_{j=1}^s A_j X^{e_j} \in \Z_q[X]
$$
be  a   polynomial   such that 
$$
\gcd(A_1, \ldots, A_s, q)  = D  \mand e_s > \ldots > e_1 \ge 1.
$$
If for some  fixed $\varepsilon > 0$ we have 
$$
p^\rho \le  s^{-1}   (q/D)^{1/(s+1) -\varepsilon},
$$
where 
$$
\rho = \ord_p \Delta(e_1, \ldots, e_s), 
$$
then 
$$
  N_F(a,h;q) \le D h  + D^{\varepsilon} q^{1-\varepsilon} \log q.
$$
\end{cor}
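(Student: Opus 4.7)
The plan is to reduce to Lemma~\ref{lem:Congr} via the identity~\eqref{eq:Red}. First I would dispose of the degenerate cases. If $h > q/D$, then $Dh > q \ge N_F(a,h;q)$, so the inequality is immediate; and if $D = q$, the polynomial $F$ is identically zero modulo $q$, whence $N_F(a,h;q) \le \varphi(q) < q \le Dh$ trivially. Thus I may assume $h \le q/D$ and $D < q$, which is precisely the regime in which~\eqref{eq:Red} applies.

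Since $q = p^{\alpha}$ and $D \mid q$, necessarily $D = p^{\delta}$ for some $0 \le \delta < \alpha$, and $q/D = p^{\alpha-\delta}$ is again a power of the same prime. I would then consider the polynomial
\[
G := D^{-1}F \in \Z_{q/D}[X],
\]
whose coefficients $A_j/D$ satisfy $\gcd(A_1/D,\ldots,A_s/D,p) = 1$ by the defining property of $D$, and whose exponents (hence the value $\rho = \ord_p \Delta(e_1,\ldots,e_s)$) are unchanged. In particular the hypothesis of the corollary, namely $p^{\rho} \le s^{-1}(q/D)^{1/(s+1)-\varepsilon}$, is exactly what Lemma~\ref{lem:Congr} requires in order to be applied to $G$ modulo $q/D$.

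Applying Lemma~\ref{lem:Congr} to $G$ modulo $q/D$ would yield
\[
N_G(a,h;q/D) \le h\,\frac{p-1}{p} + O\bigl((q/D)^{1-\varepsilon}\log(q/D)\bigr),
\]
and combining this with~\eqref{eq:Red} gives
\[
N_F(a,h;q) \;=\; D\cdot N_G(a,h;q/D) \;\le\; Dh + O\bigl(D\cdot (q/D)^{1-\varepsilon}\log q\bigr) \;=\; Dh + O\bigl(D^{\varepsilon} q^{1-\varepsilon}\log q\bigr),
\]
which is the claimed bound.

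There is no substantive obstacle here: the corollary is essentially a repackaging of Lemma~\ref{lem:Congr} obtained by stripping off the common $p$-part $D$ from the coefficients of $F$ and passing to the primitive polynomial $G$ modulo the smaller prime-power $q/D$. The only points that need genuine care are (i) observing that $\rho$ depends only on the exponents and is therefore invariant under this reduction, and (ii) verifying the trivial boundary cases $h > q/D$ and $D = q$ separately, since~\eqref{eq:Red} is only stated for $h \le q/D$.
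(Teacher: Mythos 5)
Your argument is correct and coincides with the paper's: the paper proves Corollary~\ref{cor:CongGen} implicitly by stating the reduction identity~\eqref{eq:Red}, applying Lemma~\ref{lem:Congr} to the primitive polynomial $D^{-1}F$ modulo $q/D$, and remarking that the case $h>q/D$ is trivial. You spell out the same steps, with the minor (and correct) extra observation that $D=q$ is also trivial and that $\rho$ is unaffected by stripping off the common $p$-power from the coefficients.
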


Note that in Corollary~\ref{cor:CongGen} we have abandoned the condition $h \le q/D$
which is needed for~\eqref{eq:Red} as for $h >q/D$ its bound is trivial.

\section{Background on lattices}
\label{sec:Lat CVP}

As in~\cite{BV1,BV2}, and then in~\cite{Shp1,ShpWint}, 
our results rely on 
some lattice algorithms. 
 We therefore review some relevant results and definitions, we refer 
to~\cite{ConSlo,GrLoSch,GrLe} for more details and the general theory.

Let $\{{\vb}_1,\ldots,{\vb}_N\}$ be a set of $N$ linearly independent vectors in
${\R}^N$. The set
of  vectors
$$
\cL=\left \{\vz  \ : \ \vz=\sum_{i=1}^N\ c_i\vb_i,\quad c_1, \ldots, c_N\in\Z\right \}
$$
is called an $s$-dimensional full rank lattice. 

The set $\{ {\vb}_1,\ldots, {\vb}_N\}$ is called a {\it basis\/} of $\cL$.

The volume of the parallelepiped defined by the 
vectors ${\vb}_1,\ldots, {\vb}_N$ is called {\it 
the volume\/} of the lattice and denoted by $\vol(\cL)$. 
Typically, lattice problems are easier when the Euclidean 
norms of all basis vectors are close to $\vol(\cL)^{1/N}$. 

Let $\|\vz\|$ denote the standard Euclidean norm in ${\R}^N$.

One of the most fundamental problems in this area is 
 the {\it closest vector problem\/},  CVP:
given a basis of a lattice $\cL$ in $\R^N$ and a target vector $\vec{u} \in \R^N$,
find a lattice vector $\vec{v} \in \cL$ which minimizes
the Euclidean norm $\|\vec{u}-\vec{v}\|$ among all lattice vectors.
It is well know that  CVP is  {\bf NP}-hard when the 
dimension $N\to \infty$
(see~\cite{MicVou,Ngu,NgSt1,NgSt2,Reg} for references). 


There are several approximate algorithms to find vectors in lattices which are close to a given target vector $\vr =(r_1, \ldots, r_N) \in{\R}^N$, 
see~\cite{AKS,Kan,Schn} which build on the    classical lattice basis reduction algorithm of Lenstra, Lenstra
and  Lov{\'a}sz~\cite{LLL}, we also refer to~\cite{Ngu,NgSt1,NgSt2,Reg} for possible improvements and further references.

However, it is important to observe that  in our case, the dimension of the lattice is bounded so we can use 
one of the deterministic  algorithms which finds the closest vector exactly. 
For example, we appeal to  the following result of 
Micciancio and  Voulgaris~\cite[Corollary~5.6]{MicVou}.

\begin{lemma}
\label{lem:CVP} Assume that we are given a basis of a lattice $\cL$, which consists of 
vectors of rational numbers ${\vb}_1,\ldots,{\vb}_N\in \Q^N$ and  a vector $\vr  \in{\Q}^N$
such that their numerators and denominators of ${\vb}_1,\ldots,{\vb}_N, \vr$ are at most $n$-bits long. 
There is  a deterministic algorithm which for  a fixed $N$, in time  polynomial in $n$,  finds a lattice vector
$\vv = (v_1, \ldots, v_N) \in \cL$ satisfying the inequality
$$
  \|\vv - \vr\|  =  \min \left\{  \|\vv - \vz\|~:~ \vz   \in \cL\right\}.
$$
\end{lemma}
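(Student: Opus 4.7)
The plan is to invoke the exact CVP algorithm of Micciancio and Voulgaris~\cite{MicVou} and verify that the polynomial bit-complexity stated in the lemma matches what their algorithm guarantees. Since the lemma is essentially a restatement of their Corollary~5.6, I would first clear denominators in $\vb_1,\ldots,\vb_N$ and $\vr$ to reduce to the integer-lattice case, observing that the resulting instance has bit-size still polynomial in $n$.

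The Micciancio--Voulgaris algorithm solves CVP exactly in time $2^{O(N)}\cdot \mathrm{poly}(n)$ via the Voronoi-cell approach. In outline: (a) recursively compute all $O(2^N)$ Voronoi-relevant vectors of $\cL$ by lifting shortest coset representatives in $\cL/2\cL$; (b) iteratively translate the target $\vr$ by Voronoi-relevant vectors until the residue lies in the open Voronoi cell around $\0$. The accumulated translation is the closest lattice vector. Each iteration strictly decreases the norm of the residue by a controlled factor, so the number of iterations is polynomial in $n$.

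Since $N$ is fixed in our lemma, the $2^{O(N)}$ factor is an absolute constant, leaving only the $\mathrm{poly}(n)$ factor in the running time, which is what the statement claims. A simpler alternative argument, also sufficient for our fixed-$N$ setting, would be to run LLL~\cite{LLL} on $\vb_1,\ldots,\vb_N$ to obtain a reduced basis, apply Babai's nearest-plane procedure to produce a lattice vector $\vv_0$ within a $2^{O(N)}$ factor of the optimum, and then exhaustively enumerate all lattice points within the corresponding ball around $\vv_0$ via Kannan's algorithm~\cite{Kan}, returning the one closest to $\vr$. For fixed $N$ this ball contains only $O(1)$ lattice points.

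The main obstacle, were one to give a self-contained proof rather than invoke~\cite{MicVou}, is controlling the bit-size of intermediate rationals: the Gram--Schmidt coefficients, the enumeration radius, and the Voronoi-relevant vectors must all be shown to have numerators and denominators of bit-length polynomial in $n$. These bounds follow from standard Hadamard-type estimates applied to subdeterminants of the basis matrix, but the full bookkeeping is somewhat intricate; for our purposes it is cleaner to quote the result of~\cite{MicVou} directly.
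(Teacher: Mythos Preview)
Your proposal is correct and aligns with the paper's approach: the paper does not prove this lemma at all but simply quotes it as \cite[Corollary~5.6]{MicVou}. Your write-up is in fact more detailed than what the paper provides, since you sketch the Voronoi-cell algorithm, offer the LLL--Babai--Kannan alternative for fixed $N$, and flag the bit-size bookkeeping issue; none of this appears in the paper, which treats the lemma as a black-box citation.
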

%

\section{Lattices and polynomial approximations}
\label{sec:Lap Poly}

Let $\ve =    (e_1, \ldots,  e_s)$.

For  $t_1, \ldots, t_d \in \Z_q$, we denote by $ \cL_{\ve,q}\(t_1, \ldots, t_d\)$ the $(d+s)$-dimensi\-onal lattice
generated by the rows of the following $(d +s)\times (d+s)$-matrix
\begin{equation}
\label{eq:matrix}
 \begin{pmatrix}
 q & 0 & \ldots &  0   & 0&  \ldots  & 0\\
 0 & q & \ldots &  0   &0&  \ldots  & 0\\
 \vdots & {} &\ddots& {} &\vdots & { } & \vdots\\
 0 & 0 &  \ldots & q &  0 & \ldots & 0  \\
 t_1^{e_1} & t_2^{e_1} &  \ldots & t_d^{e_1} &  1/2^{n+1}  & \ldots & 0 \\
 \vdots & {} & { } & {} & \vdots &\ddots & \vdots\\
 t_1^{e_s} & t_2^{e_s} & \ldots & t_d^{e_s} &   0& \ldots & 1/2^{n+1}
 \end{pmatrix}. 
\end{equation}

The following result is a  generalization of
several previous results of similar flavour obtained for a large prime number $q=p$, see~\cite{BV1,Shp1,ShpWint}. 

\begin{lemma}
\label{lem:Latt-Poly}
Let $q= p^k$ be a sufficiently large $n$-bit 
power of $p$ and let $s \ge 1$ be a fixed  integer. 
Let  $f(X)    \in \Z_{q}[X]$
be  a   polynomial  
of the form~\eqref{eq:poly} 
with known  exponents $1\le e_1 < \ldots < e_s < q$. 
If  the conditions~\eqref{eq:Cond rho eta} hold then  for 
$$d = \rf{ 2(s+1) \varepsilon^{-1}},
$$ 
t the following holds. If  $t_1, \ldots, t_d \in \Z_q^*$ are
chosen uniformly and independently at random, then with
 probability $P \ge 1 - 1/q$ 
for any vector $\vu= (u_1, \ldots, u_d, 0, \ldots, 0)$ with
$$
\(\sum_{i=1}^{d} \(\flq{f(t_i)} - u_i\)^2\)^{1/2} \le  2^{-\ell} q,
$$
all vectors
$$
\vv = (v_1, \ldots, v_{d}, v_{d+1}, \ldots, v_{d+s}) 
  \in  \cL_{\ve,q}\(t_1, \ldots, t_d\)
$$
satisfying
$$
\(\sum_{i=1}^d \(v_i - u_i\)^2\)^{1/2} \le  2^{-\ell} q,
$$
are of the form
$$
\vv =\( \flq{\sum_{j=1}^s b_j t_1^{e_j}}, \ldots,  
\flq{\sum_{j=1}^s b_j t_d^{e_j} },   b_1/2^{k+1}, \ldots, b_s/2^{k+1}\)
$$
with some integers $b_j \equiv a_j \pmod q$, $j =1, \ldots, s$.
\end{lemma}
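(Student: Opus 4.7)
The plan is to follow the CVP-based template of Boneh--Venkatesan~\cite{BV1} and Shparlinski~\cite{Shp1, ShpWint}, with Corollary~\ref{cor:CongGen} as the key number-theoretic input.

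First, I would identify a canonical lattice vector corresponding to the true coefficients. Combining $\sum_{j=1}^{s} a_j$ times the last $s$ rows of the matrix~\eqref{eq:matrix} with appropriate integer multiples of the rows $q\vec{e}_i$ (to reduce the first $d$ entries to their representatives in $\{0,1,\ldots,q-1\}$) produces
$$\vec{h} = \(\flq{f(t_1)}, \ldots, \flq{f(t_d)}, a_1/2^{n+1}, \ldots, a_s/2^{n+1}\) \in \cL_{\ve,q}(t_1,\ldots,t_d).$$
By the triangle inequality applied to the first $d$ coordinates and the two proximity hypotheses (on $\vu$ relative to the true values, and on $\vv$ relative to $\vu$), the difference $\vec{w} = \vv - \vec{h}$ is a lattice vector with $|w_i| \le 2^{1-\ell}q$ for every $i = 1, \ldots, d$.

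Next, I would expand $\vec{w}$ in the given basis: there exist integers $c_i$ and $b_j$ such that the first $d$ coordinates of $\vec{w}$ read $w_i = c_i q + F(t_i)$ with $F(X) = \sum_{j=1}^{s} b_j X^{e_j}$, while the last $s$ coordinates are $b_j/2^{n+1}$. Combined with $|w_i| \le 2^{1-\ell} q$, this forces $F(t_i) \bmod q$ to lie in a symmetric window $W$ of half-length $2^{1-\ell}q$ about $0$ at each $i$. Setting $B_j = a_j + b_j$, the stated form of $\vv$ is equivalent to $b_j \equiv 0 \pmod q$ for every $j$. The lemma therefore reduces to showing that, with probability at least $1 - 1/q$ over the $t_i$, no nonzero residue class $(b_1,\ldots,b_s) \in \Z_q^s$ simultaneously satisfies $F(t_i) \bmod q \in W$ for all $i = 1, \ldots, d$.

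For the probabilistic bound, I would fix a nonzero class with divisor $D = \gcd(b_1,\ldots,b_s,q) = p^i$. From~\eqref{eq:Cond rho eta} one obtains $p^\rho \le s^{-1} 2^{\ell(1/(s+1)-\varepsilon)}$, which for a suitable $\varepsilon' \in (0,\varepsilon]$ depending only on $s$ and $\varepsilon$ ensures the hypothesis $p^\rho \le s^{-1}(q/D)^{1/(s+1)-\varepsilon'}$ of Corollary~\ref{cor:CongGen} for the range of $D$ where a nontrivial bound is needed. Applying that corollary with window length $h \asymp 2^{-\ell} q$ yields the per-sample bad-event probability
$$P(D) \ll \frac{D h + D^{\varepsilon'} q^{1-\varepsilon'} \log q}{\varphi(q)},$$
and by independence of the $t_i$ the joint probability that this fixed class is bad at every $t_i$ equals $P(D)^d$.

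Finally, I would sum over all nonzero residue classes, grouped by $D = p^i$ for $i = 0, 1, \ldots, k-1$, using that at most $(q/D)^s$ classes share a given $\gcd$. With $d = \lceil 2(s+1)\varepsilon^{-1} \rceil$ and $\ell \ge (0.5+\varepsilon) n$, careful tracking of the exponents should give a total bound of the form $O(\log^{d} q \cdot q^{-c})$ for some $c > 1$, beating the target $1/q$. The main obstacle I anticipate is precisely this last balancing: the per-event bound has two competing terms, the multiplicity $(q/D)^s$ grows fastest for small $D$, and Corollary~\ref{cor:CongGen} becomes tight as $D$ approaches the threshold $q \cdot 2^{-\ell}$. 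Handling this likely requires splitting the sum into ranges of $D$ (small, medium, and large) with different choices of $\varepsilon'$, and for very large $D$ exploiting that $F = D G$ with $G$ coprime (as a coefficient set) to $q/D$ so Lemma~\ref{lem:Congr} can be re-applied with the smaller modulus $q/D$, where the shrunken window and the reduced class count $(q/D)^s$ compensate for the loss of the direct Corollary~\ref{cor:CongGen} bound.
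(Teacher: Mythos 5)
Your overall architecture matches the paper's proof: identify the lattice vector $\vec{h}$ carrying the true coefficients, observe that any competing short vector $\vv$ corresponds to a nonzero sparse difference polynomial $F=\sum_j b_j X^{e_j}$ whose values $F(t_i)\bmod q$ must all fall near $0$, and then bound the probability of this via Corollary~\ref{cor:CongGen} and a union bound. Where you diverge is in how to close the union bound. You propose to stratify the nonzero residue classes $(b_1,\ldots,b_s)$ by $D=\gcd(b_1,\ldots,b_s,q)=p^i$, use the class count $(q/D)^s$, and anticipate a delicate balancing act between competing terms that ``likely requires splitting the sum into ranges of $D$.'' That difficulty is manufactured: the paper avoids it entirely by noting that $D$ divides every $F(t)$, so if $\dist_q(f(t),g(t))\le 2^{-\ell+1}q$ is even possible then $D\le 2^{-\ell+1}q$. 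Feeding this worst-case bound on $D$ together with $h\asymp 2^{-\ell}q$ into Corollary~\ref{cor:CongGen} gives a \emph{uniform} per-polynomial, per-sample bound $P(g)\ll 2^{-2\ell}q + 2^{-\ell\varepsilon}\log q\ll q^{-\varepsilon/2}$ for every $g\in\cP_f$ (the bound on $D$ also verifies the hypothesis $p^\rho\le s^{-1}(q/D)^{1/(s+1)-\varepsilon}$ directly, without needing a separate $\varepsilon'$). A flat union bound over all $q^s-1$ polynomials with $d=\rf{2(s+1)\varepsilon^{-1}}$ independent samples then gives $(q^s-1)\,q^{-d\varepsilon/2}<1/q$ immediately, and the deterministic lattice contradiction (as in~\cite{BV1}) finishes the proof. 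In fact the paper's Comments section explicitly mentions your $D$-stratified refinement and notes it could only marginally reduce $d$, which is not optimised anyway; so that route is neither necessary nor particularly fruitful here. The concrete gap in your write-up is thus not in the strategy but in not recognizing that the single observation $D\le 2^{-\ell+1}q$ makes the uniform bound strong enough to dispense with any range-splitting.
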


\begin{proof} As in~\cite{BV1} we define the modular distance between two
integers $\beta$ and $\gamma$ as
\begin{align*}
\dist_q(\beta,\gamma) &  =   \min_{b \in \Z} |\beta - \gamma - bq|\\
&  = 
\min \left\{\flq{\beta - \gamma}\,,
q - \flq{\beta - \gamma} \right\}.
\end{align*}

Let $\cP_f$ denote the set of $q^s-1$ polynomials  
$$
g(X) = \sum_{j=1}^s b_j X^{e_j} \in \Z_{q}[X]
$$
with $g \ne f$.

For a polynomial $g \in \cP_f$ we denote by $P(g)$
the probability  that 
\begin{equation}
\label{eq:Close f g}
\dist_q( f(t), g(t))  \le  2^{-\ell+ 1} q,
\end{equation}
for $t \in \Z_{q}^*$ selected uniformly at random. 
To estimate $P(g)$ we consider the polynomial 
\begin{equation}
\label{eq:poly F}
F(X) = f(X)-g(X) =   \sum_{j=1}^s A_j X^{e_j} \in \Z_q[X].
\end{equation}
Clearly, $F(X)$ is not identical to zero in $\Z_q$. 
Hence, if~\eqref{eq:Close f g} is possible for some $t \in \Z_q^*$, then for 
$$
D = \gcd(A_1, \ldots, A_s, q) 
$$
we have 
\begin{equation}
\label{eq:D}
D \le   2^{-\ell+ 1} q.
\end{equation}
Therefore, by our assumption, we have 
$$
 s^{-1}   (q/D)^{1/(s+1) -\varepsilon} \ge  s^{-1} 2^{\ell(1/(s+1) -\varepsilon) } \ge p^\rho, 
 $$
 and thus  Corollary~\ref{cor:CongGen} applies. 

We now set 
\begin{equation}
\label{eq:a h}
a = -  \fl{ 2^{-\ell+ 1} q} \mand h = 2 \rf{ 2^{-\ell+ 1} q} + 1. 
\end{equation}
We see from Corollary~\ref{cor:CongGen} that
$$
P(g) =    \frac{1}{\varphi(q)} N_F(a,h;q) \le  \frac{1}{\varphi(q)} \(D h  +   D^{\varepsilon} q^{1-\varepsilon} \log q\).
$$
Hence, recalling the bound~\eqref{eq:D} and the choice of $h$ in~\eqref{eq:a h} we obtain
$$
P(g) \ll   2^{-2\ell} q +   2^{-\ell \varepsilon }   \log q.
$$
Recalling the inequalities~\eqref{eq:Cond rho eta}, we obtain
$$
2^{-2\ell} q \ll 2^{n -2\ell}  \ll 2^{-2\varepsilon n} \ll q^{-2\varepsilon}
$$
and 
$$
2^{-\ell\varepsilon  }  \log q \ll q^{(1/2+\varepsilon)\varepsilon)} \log q \le   q^{- \varepsilon/2}.
$$
Hence  
$$
P(g) \ll   q^{-  \varepsilon/2}  
$$
provided that $q$ is large enough.

Therefore, for any  $g \in \cP_f$,
\begin{align*}
 \Pr&\left[\exists i\in [1,d] \ | \ \dist_q( g(t_i), f(t_i)) >   2^{-\ell+ 1} p\right]\\
 &  \qquad \qquad \qquad \qquad  \qquad   = 1 - P(g)^d\ge 1 - q^{-d  \varepsilon/2},
\end{align*}
where the probability is taken over $t_1, \ldots, t_d \in \Z_{q}$
chosen uniformly and independently at random.

Since $\# \cP_f = q^s -1$,  taking
$$
d = \rf{ 2(s+1) \varepsilon^{-1}}, 
$$ 
we obtain 
\begin{align*}
\Pr\left[\forall  g \in \cP_f, \ 
\exists i\in [1,d] \ |  \ \dist_q(g(t_i), f(t_i)) >  2^{-\ell +1} q\right] & \\
\ge 1 - (q^s - 1) q^{-d  \varepsilon/2}& > 1 - 1/q, 
\end{align*}
provided that $q$ is large enough.

The rest of the
proof is essentially  identical to the proof of~\cite[Theorem~5]{BV1},
see also  the proof of~\cite[Theorem~8]{Shp1}. 
Indeed, we fix some integers $t_1, \ldots, t_d$ with
\begin{equation}
\label{Condition}
\min_{g \in \cP_f} \  \max_{i\in [1,d]}
 \dist_q(g(t_i), f(t_i))  >  2^{-\ell +1} q.
\end{equation}

Let $\vv \in  \cL_{\ve,q}\(t_1, \ldots, t_d\)$ be a lattice point satisfying
$$
\(\sum_{i=1}^d \(v_i - u_i\)^2\)^{1/2} \le  2^{-\ell} q.
$$
Clearly, since $\vv\in  \cL_{\ve,q}\(t_1, \ldots, t_d\)$,
there are some  integers $\beta_1, \ldots, \beta_s$ and $z_1,\ldots,z_d$ such that
$$
\vv   =   \(\sum_{j=1}^s \beta_j t_1^{e_j} - z_1 q,\ldots,\sum_{j=1}^s \beta_j t_d^{e_j} - z_d
q, \beta_1/2^{k+1}, \ldots, \beta_s/2^{k+1}\).
$$

If  $\beta_j \equiv \alpha_j \pmod q$, $j =1, \ldots, s$, then for all $i=1,\ldots,d$ we have
$$
\sum_{j=1}^s \beta_j t_i^{e_j} - z_i q = \flq{\sum_{j=1}^s \beta_j t_i^{e_j}} = \flq{f(t_i)},
$$
since otherwise there
is 
$i\in [1,d] $ such that 
$|v_i - u_i | >  2^{-\ell} q$.

Now suppose that $\beta_j \not \equiv \alpha_j \pmod q$ for some $j \in [1,s]$. 
In this case we have
\begin{align*}
  \(\sum_{i=1}^d \(v_i - u_i\)^2\)^{1/2} &  \ge  \min_{i \in [1,d]}
 \dist_q\(\sum_{j=1}^s \beta_j t_i^{e_j}, u_i\) \\ 
&   \ge   \min_{i \in [1,d]}  
 \dist_q\(f(t_i), \sum_{j=1}^s \beta_j t_i^{e_j}\)  -  \dist_q\(u_i, f(t_i)\)\\
&     >   2^{-\ell +1} q -  2^{-\ell} q =  2^{-\ell} q, 
\end{align*}
that
contradicts  our assumption.
As we have seen, the condition~(\ref{Condition}) holds with probability
exceeding $1- 2^{-\ell}$ and the result follows.
\end{proof}

\section{Proof of Theorem~\ref{thm:PolAppr}}

As in all previous works, we follow the same arguments as in the proof of 
of~\cite[Theorem~1]{BV1}
which we  briefly outline here for the sake of completeness. We refer to
the first $d$ vectors in  the
matrix~\eqref{eq:matrix} as $q$-vectors and we refer to the other $s$ vectors 
as power-vectors.

We recall~\eqref{eq:tiwi} and consider the vector 
$$\vw =(w_1, \ldots, w_d,w_{d+1}, \ldots, w_{d+s})
$$
where
$$
w_{d+j} =0, \qquad j =1, \ldots, s.
$$

We can certainly assume that 
$$
\ell \le n
$$ 
as otherwise the result is trivial. 
Then multiplying the  $j$th power-vector  of the
matrix~\eqref{eq:matrix} 
 by $\alpha_j$ and subtracting a
 certain multiple of the $j$th $q$-vector, $j =1, \ldots, s$, we obtain a lattice point
$$
{\vu}_{f} =  (u_1,\ldots,u_d,\alpha_1/2^{n+1}, \ldots, \alpha_s/2^{n+1})\  \in   \cL_{\ve,q}\(t_1, \ldots,
t_d\)
$$
 such that
$$
\left|u_i - w_i\right| < q2^{-\ell-1}, \qquad i = 1, \ldots, d+s, 
$$
where $u_{d+j} = \alpha_j/2^{n+1}$, $j =1, \ldots, s$.
Therefore,
$$
\sum_{i=1}^{d+s} \(u_i - w_i\)^2  \le  (d+s)  2^{-2\ell -2} q^2.
$$
We can assume that $q$ is large enough so that $s \le n$. 
Therefore $d+s = O(n)$. 
Now we can use Lemma~\ref{lem:CVP}  to find in polynomial
time a lattice vector
$\vv = (v_1,\ldots,v_d,v_{d+1},  \ldots, v_{d+s} )\in  \cL_{\ve,q}\(t_1, \ldots, t_d\)$ such that
\begin{align*}
 \sum_{i=1}^d \(v_i - w_i\)^2 
& =  \min
\left\{\sum_{i=1}^{d+s} \(z_i - w_i\)^2 ,\quad \vz = \(z_1,
\ldots, z_{d+s}\) \right\}\\
& \le  (d+s)  2^{-2\ell -2} q^2 \le  2^{-2\ell_0 -2} q^2,
 \end{align*}
provided that $q$ is large enough, where, for example, we can choose
$$
\ell_0  =  (0.5 + \varepsilon/2)n. 
$$  
Applying Lemma~\ref{lem:Latt-Poly} with $\ell_0$ in place of $\ell$, and thus with $\varepsilon/2$
in place of $\varepsilon$ we see that $\vv={\vu}_{f}$ with probability
at least $1- 1/q$, and therefore the coefficients of  $f$ can be recovered in
polynomial time.

\section{Comments}

It seems like a natural idea to classify polynomials  $g \in \cP_f$  in the proof of Theorem~\ref{thm:PolAppr} 
depending on the size of $D = \gcd(A_1, \ldots, A_s, q)$ where $A_1, \ldots, A_s$ are as in~\eqref{eq:poly F}, 
instead of using  the worst case  bound~\eqref{eq:D} 
We can then  take into account that for a given $D = p^r$ there are at most $(q/p^r)^s$ polynomials 
$g \in \cP_f$  with this values of $D$. Unfortunately, this approach may only help to reduce slightly 
the value of $d$ in Theorem~\ref{thm:PolAppr}, which is not optimised anyway. 

We remark that here we essentially consider the interpolation  problem when the values of a polynomial $f$ are corrupted 
by an additive noise. That is, for any $t \in \Z_q^*$ we are given $f(t) + \vartheta$ for some $\vartheta\in  \Z_q$ which is 
not too large.   For a large prime $q = p$, in~\cite{vzGSh}  the case of multiplicative noise  has been studied, 
where  for any $t \in \Z_q^*$ we are given the residue 
modulo $q$ of $\rho f(t)$ 
for some rational  $\rho$ with not too large numerator and denominator.  It is certainly   interesting to consider this scenario
with multiplicative noise modulo  powers  of small primes as in this work. 

 \section*{Acknowledgement}
 
This work started  during a very enjoyable visit of the second author 
to the University of Bonn, whose hospitality is very much appreciated. 
This visit was supported by  the excellence grant EXC~2-1 of the
Hausdorff Center for Mathematics. 

During the preparation of this work the first author was supported in part by the Deutsche Forschungsgemeinschaft 
and the second author by the  Australian Research Council.

\end{document}